\documentclass[12pt]{amsart}
\usepackage{amsmath}
\usepackage{amsthm}
\usepackage{amssymb}
\usepackage{amscd}
\usepackage{graphics}

\usepackage{hyperref}
\usepackage{color}

\setlength{\textwidth}{6.5in} \setlength{\textheight}{8.5 in}
\setlength{\oddsidemargin}{0.0 in}
\setlength{\evensidemargin}{\oddsidemargin}
\hfuzz2pt 
\vfuzz1.5pt


\newcommand{\gothic}{\mathfrak}
\newcommand{\p}{{\gothic{p}}}

\newcommand{\Q}{{\Bbb{Q}}}
\newcommand{\m}{{\gothic{m}}}

\newcommand{\fkm}{\mathfrak m}

\newcommand{\vol}{\operatorname{vol}}

\newcommand{\depth}{\operatorname{depth}}

\newcommand{\pd}{\operatorname{pd}}

\newcommand{\syz}{\operatorname{syz}}
\newcommand{\Tor}{\operatorname{Tor}}

\newcommand{\IPD}{\operatorname{IPD}}

\newcommand{\Cl}{\operatorname{Cl}}

\newcommand{\length}{\ell}

\renewcommand{\bar}{\overline}
\renewcommand{\phi}{\varphi}

	\newcommand{\hk}{\text{e}_{\text{gHK}}}

\newcommand{\fhk}[1]{f_{gHK}^{#1}}

\DeclareMathOperator{\lc}{H}

\DeclareMathOperator{\Proj}{Proj}

\newcommand{\ses}[3]{0 \to {#1} \to {#2} \to {#3} \to 0}

\newcommand{\ps}[1]{{{}^{n}\!#1}}


\theoremstyle{plain}
\newtheorem{thm}{Theorem}
\newtheorem{cor}[thm]{Corollary}
\newtheorem{prop}[thm]{Proposition}
\newtheorem{lemma}[thm]{Lemma}
\newtheorem{notn}[thm]{Notation}

\newtheorem{eg}[thm]{Example}

\theoremstyle{definition}
\newtheorem{defn}[thm]{Definition}

\theoremstyle{remark}
\newtheorem{rmk}[thm]{Remark}
\newtheorem{question}[thm]{Question}

\newtheorem*{acknowledgement}{Acknowledgments}



\begin{document}
\title[Some computations of  generalized Hilbert-Kunz function and multiplicity]
{Some computations of  generalized Hilbert-Kunz function and multiplicity}
\author{Hailong Dao}
\address{Department of Mathematics\\
University of Kansas\\
 Lawrence, KS 66045-7523 USA}
\email{hdao@ku.edu}

\author{Kei-ichi Watanabe}
\address{Department of Mathematics, College of Human and Science\\
 Nihon University\\
  Setagaya, Tokyo, 156-0045, Japan}
\email{watanabe@math.chs.nihon-u.ac.jp}

\date{\today}
\thanks{The first author is partially supported by NSF grant 1104017. The second author was partially supported by  JSPS Grant-in-Aid for Scientific Research (C) Grant Number 26400053.}
\keywords{Frobenius endomorphism, generalized Hilbert-Kunz multiplicity, toric rings, isolated singularity}

\subjclass{Primary: 13A35; Secondary:13D07, 13H10.}
\bibliographystyle{amsplain}

\numberwithin{thm}{section}
\numberwithin{equation}{section}
\begin{abstract}
Let $R$ be a  local ring of characteristic $p>0$ which is $F$-finite and has perfect residue field. We compute the  generalized Hilbert-Kunz invariant (studied in \cite{DS, EY}) for certain modules over several classes of rings: hypersurfaces of finite representation type,  toric rings, $F$-regular rings.
\end{abstract}

\maketitle
\tableofcontents

\section{Introduction}\label{intro}

Let $R$ be a  local ring of characteristic $p>0$ which is $F$-finite and has perfect residue field.  Let $M$ a finitely generated $R$-module. Let $F_R^n(M) = M\otimes_R \ps R$
 denote the $n$-fold iteration of the Frobenius functor given by base change along 
 the Frobenius endomorphism. 
Let $\dim R = d$ and $q=p^n$. This paper constitutes a further study of the following:  
\[\fhk M(n) :=\length(\lc^0_\m(F^n(M)))\]
and
\[\hk(M) := \lim_{n \to \infty} \frac{\fhk M(n)}{p^{nd}},\]
which are called  the {\it generalized Hilbert-Kunz function} and {\it generalized Hilbert-Kunz multiplicity} of $M$, respectively.  These notions were first defined by Epstein-Yao in \cite{EY} and were studied in details in \cite{DS}. For instance, it is now known that $\hk(M)$ exists for all modules over a Cohen-Macaulay isolated singularity. 

It is a non-trivial and interesting problem to compute even the classical Hilbert-Kunz multiplicity. In this note we focus on computing $\fhk M$ and the limit $\hk(M)$ for certain modules in a number of cases: when $R$ is a normal domain of dimension $2$ (section \ref{dim2}), a hypersurface of finite representation type (section \ref{ft}) and when $R$ is a toric ring (section \ref{toric}). We also point out a connection between the generalized Hilbert-Kunz limits and tight closure theory in section \ref{fr}. Namely,  over $F$-regular rings, these limits detect depths of the module $M$ and all of its pull-back along iterations of Frobenius.

\begin{acknowledgement}
The authors would like to thank Jack Jeffries and Jonathan Monta\~no for some helpful discussions on the subject of $j$-multiplicity. We also thank MSRI and Nihon University for providing excellent environment for our collaboration. 
\end{acknowledgement}

\section{Dimension two}\label{dim2}

In this section, we prove certain preliminary facts about behavior of $\fhk M$ when $R$ is normal and  $M=R/I$ where $I$ is reflexive. We then apply them to give a formula for $\hk(R/I)$ when $I$ represents 
 a torsion element in the class group of $R$. 

\begin{lemma}\label{keylem}
Let $R$ be a local normal  domain  of dimension at least $2$ and $I$ a reflexive  ideal that is locally free on the punctured spectrum. Then $$\length(\lc^0_{\m}(R/I^{[q]})) = \length(I^{(q)}/I^{[q]}) =  \length(I^{(q)}/I^{q}) +  \length(I^{q}/I^{[q]}) =  \length(\lc^0_{\m}(R/I^{q}))+ \length(I^{q}/I^{[q]}) $$   
\end{lemma}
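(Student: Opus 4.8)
The plan is to show that the whole four-term chain collapses to a single statement about $\m$-saturations, together with additivity of length. First recall that for any ideal $J$ the module $\lc^0_\m(R/J)$ is precisely the $\m$-power torsion of $R/J$, that is, $(J:\m^\infty)/J$. Hence the outer two equalities follow immediately once I establish the \emph{Saturation Claim}: $(I^{[q]}:\m^\infty) = (I^q:\m^\infty) = I^{(q)}$. Granting this, the first equality reads $\length(\lc^0_\m(R/I^{[q]})) = \length(I^{(q)}/I^{[q]})$ and the last reads $\length(\lc^0_\m(R/I^q)) = \length(I^{(q)}/I^q)$, while the middle equality is just additivity of length along the chain $I^{[q]} \inc I^q \inc I^{(q)}$ (the inclusion $I^{[q]}\inc I^q$ being clear, and $I^q \inc I^{(q)}$ holding for the symbolic power).

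To prove the Saturation Claim I would first use local freeness to control supports. On the punctured spectrum each $I_\p$ (for $\p\neq\m$) is invertible, hence principal, say $I_\p = aR_\p$; then $I^{[q]}_\p = (a^q) = I^q_\p = I^{(q)}_\p$, so the three ideals $I^{[q]}$, $I^q$, $I^{(q)}$ all agree after localizing away from $\m$. Consequently $I^{(q)}/I^{[q]}$, $I^{(q)}/I^q$ and $I^q/I^{[q]}$ are finitely generated and supported only at $\m$, hence of finite length; in particular $I^{(q)} \inc (I^{[q]}:\m^\infty)$ and $I^{(q)} \inc (I^q:\m^\infty)$. For the reverse inclusions I use that, $I$ being reflexive and locally free of rank one on the punctured spectrum, every minimal prime of $I$ has height one, and therefore lies strictly inside $\m$ (this is where $\dim R \geq 2$ is essential). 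If $x\in R$ satisfies $\m^k x \inc I^{[q]} \inc I^q$, then localizing at any $\p\in\Min(I)$ the ideal $\m^k R_\p$ is the unit ideal, which forces $x\in I^q R_\p$; since this holds at every minimal prime of $I$, the definition $I^{(q)} = \bigcap_{\p\in\Min(I)}(I^q R_\p \cap R)$ gives $x\in I^{(q)}$. This yields $(I^{[q]}:\m^\infty) \inc I^{(q)}$, and the identical argument with $I^q$ in place of $I^{[q]}$ yields $(I^q:\m^\infty) \inc I^{(q)}$, completing the claim.

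The main point — and the only place where all three hypotheses (normal, $\dim R \geq 2$, locally free on the punctured spectrum) are used together — is the Saturation Claim, and within it the identification of the $\m$-saturation with the symbolic power. Finiteness of the lengths rests on local freeness, which forces the three powers to coincide away from $\m$, while the inclusion $(J:\m^\infty)\inc I^{(q)}$ rests on every minimal prime of $I$ having height one and lying properly inside $\m$, exactly what $\dim R \geq 2$ guarantees. Once the Saturation Claim is in hand, the full four-term equality is a formal consequence of the description of $\lc^0_\m$ as $\m$-power torsion together with additivity of length, so I anticipate no further obstacle.
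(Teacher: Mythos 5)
Your proof is correct and is essentially the paper's argument in expanded form: the paper applies $\lc^0_\m$ to the sequence $\ses{I^{(q)}/I^{[q]}}{R/I^{[q]}}{R/I^{(q)}}$, using exactly the two facts you isolate --- that the quotients $I^{(q)}/I^{[q]}$, $I^{(q)}/I^q$, $I^q/I^{[q]}$ have finite length because all three ideals coincide on the punctured spectrum, and that $\lc^0_\m(R/I^{(q)})=0$ because every associated prime of $R/I^{(q)}$ has height one (your Saturation Claim is precisely this pair of facts phrased via $\m$-power torsion). The only cosmetic difference is that you work with $(J:\m^\infty)$ directly rather than with the long exact sequence of local cohomology.
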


\begin{proof}
Apply local cohomology functor to the sequence: $$\ses{\frac{I^{(q)}}{I^{[q]}}}{\frac{R}{I^{[q]}}}{\frac{R}{I^{(q)}}}.$$ Note that $\lc^0_{\m}(R/I^{(q)})=0$ and $\length(I^{(q)}/I^{q}), \length(I^{q}/I^{[q]}) <\infty$ as the ideals coincide on the punctured spectrum. 
\end{proof}

\begin{prop}
Let $R$ be a local  normal domain of dimension $2$ and $I$ be a reflexive ideal. Then $\hk(R/I)= 0$   if and only if $I$ is principal.
\end{prop}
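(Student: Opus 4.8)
The plan is to establish the two implications separately, using Lemma \ref{keylem} throughout. Since a two-dimensional normal domain is $S_2$ and hence Cohen--Macaulay, and since a reflexive ideal in a normal domain localizes to a free module at each height-one prime (normal $\Rightarrow$ regular in codimension one), the hypotheses of Lemma \ref{keylem} are met, giving
\[\fhk{R/I}(n)=\length(\lc^0_\m(R/I^{[q]}))=\length(I^{(q)}/I^{q})+\length(I^{q}/I^{[q]}),\]
where $I^{(q)}$ is the $\m$-saturation (equivalently the reflexive hull) of $I^{[q]}$. For the direction $(\Leftarrow)$, suppose $I=(a)$ is principal. As $R$ is a domain, $a$ is a nonzerodivisor, so $I^{[q]}=(a^{q})$ is generated by a single nonzerodivisor and $R/(a^{q})$ is a one-dimensional Cohen--Macaulay ring, of positive depth. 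Hence $\lc^0_\m(R/(a^{q}))=0$ for every $q$, so $\fhk{R/I}\equiv 0$ and $\hk(R/I)=0$.

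For $(\Rightarrow)$ I would argue the contrapositive: if $I$ is not principal, then $\hk(R/I)>0$. From the displayed identity, and since both summands are nonnegative, it suffices to bound one of them below by a positive multiple of $q^{2}$. Non-principality forces the minimal number of generators $\mu(I)\ge 2$, and the guiding heuristic is that the ``mixed'' products $f^{i}g^{\,q-i}$ of two members $f,g$ of a minimal generating set, together with their multiples by a fixed parameter, span inside $I^{q}$ a genuinely two-dimensional family modulo $I^{[q]}=(f_1^{q},\dots,f_m^{q})$, one whose dimension grows like $q^{2}$. I would make this rigorous either by a direct length count in the associated graded ring of the filtration $\{I^{(n)}\}_{n}$, or by relating $\length(I^{(q)}/I^{q})$ to the $\epsilon$- (equivalently $j$-) multiplicity of $I$, which is positive exactly when the analytic spread of $I$ equals $\dim R=2$.

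The main obstacle is precisely this quadratic lower bound, and the subtlety is that the two summands can behave very differently. When the analytic spread of $I$ is $2$, the first summand $\length(I^{(q)}/I^{q})=\length(\lc^0_\m(R/I^{q}))$ already grows like $q^{2}$ by positivity of the $\epsilon$-multiplicity, and we are done. The delicate case is when the fiber cone of $I$ is one-dimensional, so that this first summand is $o(q^{2})$ and all of the positivity must be extracted from the Frobenius-power term $\length(I^{q}/I^{[q]})$ alone; here I would show directly that the nonzero divisor class $[I]\in\Cl(R)$ produces a positive leading coefficient, so that $\length(I^{q}/I^{[q]})\ge c\,q^{2}$ for some $c>0$ and all large $q$. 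Combining the two cases gives $\hk(R/I)>0$ whenever $I$ is non-principal, which together with the easy direction yields the stated equivalence.
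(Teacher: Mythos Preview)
Your overall strategy coincides with the paper's: both rely on Lemma~\ref{keylem} to obtain $\hk(R/I)\ge \epsilon(I)$ and then use the fact that $\epsilon(I)>0$ precisely when the analytic spread of $I$ equals $\dim R$. The paper runs the direct implication: if $\hk(R/I)=0$ then $\epsilon(I)=0$, hence the analytic spread of $I$ is one by \cite[Theorem~4.7]{JK}, and therefore $I$ is principal.

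The gap in your contrapositive version is the ``delicate case'' of a non-principal reflexive $I$ with analytic spread one. You propose to extract quadratic growth from $\length(I^{q}/I^{[q]})$ there but never carry it out --- and in fact you need not, because that case is empty. In a two-dimensional local normal domain a reflexive ideal of analytic spread one is automatically principal: after a faithfully flat local extension with infinite residue field (which preserves $\mu(I)$) a one-generated reduction $(a)\subseteq I$ exists; at every height-one prime $P$ the inclusion $(a)_P\subseteq I_P$ is an equality since reductions in a DVR are trivial, and as both $(a)$ and $I$ are divisorially closed in a normal domain this forces $I=(a)$. So once your analytic-spread-two case is handled via the $\epsilon$-multiplicity, nothing remains. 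The paper's proof is exactly yours with this observation absorbed into the final clause ``thus $I$ is principal,'' and without the case split.
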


\begin{proof}
Only one direction needs to be checked. Suppose $\hk(R/I)= 0$. Let $\mu()$ denote the minimal number of generators of an $R$-module. We have that $ \length(\lc^0_\m(R/I^{[q]}))\geq \length(\lc^0_{\m}(R/I^{q}))$ by Lemma \ref{keylem}. It follows that $\limsup \frac{\length(\lc^0_{\m}(R/I^{q}))}{q^2}=0$, so $I$ has analytic spread one by \cite[Theorem 4.7]{JK}, thus $[I]$ is principal. 
\end{proof}

\begin{rmk}
The number   $\limsup \frac{\length(\lc^0_{\m}(R/I^{n}))}{n^2}=0$ is known as the epsilon multiplicity of $I$, $\epsilon(I)$. It has now been proved to exist as a limit under mild conditions, it see \cite{Cut}. Lemma \ref{keylem} says that $\hk(R/I) \geq \epsilon(I)$.
\end{rmk}

\begin{lemma}\label{lemcal}
Let $R$ be a local  normal domain of dimension $2$ and $I$ a reflexive  ideal. Assume that $[I]$ is torsion in $\Cl(R)$. Then $\length(\lc^0_\m(R/I^n))$ has quasi-polynomial behavior for $n$ large enough.
\end{lemma}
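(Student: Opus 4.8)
The plan is to identify $\lc^0_\m(R/I^n)$ with the colength of $I^n$ in its reflexive hull, and then to exploit the torsion hypothesis to turn the resulting length into a genuine Hilbert--Samuel function. First I would argue that the $\m$-saturation of $I^n$ coincides with the reflexive power $I^{(n)}:=(I^n)^{**}$. Indeed, $I^{(n)}$ contains $I^n$, agrees with it on the punctured spectrum (both are locally free of rank one in codimension one, since $I$ is reflexive), and is itself $\m$-saturated because a reflexive module over a two-dimensional normal domain satisfies $S_2$, forcing $\lc^0_\m(R/I^{(n)})=0$ exactly as in Lemma \ref{keylem}. By uniqueness of the saturation this gives
\[\length(\lc^0_\m(R/I^n)) = \length\big(I^{(n)}/I^n\big) =: c_n.\]

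Next I would use that $[I]$ has finite order $r$ in $\Cl(R)$, so that $I^{(r)}=(f)$ for some nonzerodivisor $f$. The key structural input is the factorization $I^{(qr+j)}=f^q\,I^{(j)}$ for all $q\ge 0$ and $0\le j<r$: the right-hand side is reflexive (being isomorphic to $I^{(j)}$ via the nonzerodivisor $f^q$) and agrees with $I^{qr+j}$ in codimension one, so the two reflexive ideals must coincide. Setting $L:=f^{-1}I^r\inc R$ (contained in $R$ because $I^r\inc I^{(r)}=(f)$), multiplication by $f$ yields $R/L\isom I^{(r)}/I^r$, which has finite length; hence $L$ is $\m$-primary. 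Writing $I^r=fL$ gives $I^{qr}=f^qL^q$, and therefore $I^{qr+j}=f^q L^q I^j$.

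With these identities in hand the reduction is purely formal. For $n=qr+j$ the isomorphism given by multiplication by $f^q$ sends $I^{(j)}$ to $I^{(n)}$ and $L^qI^j$ to $I^n$, so
\[c_n = \length\big(I^{(j)}/L^q I^j\big) = \length\big(I^{(j)}/I^j\big) + \length\big(I^j/L^q I^j\big).\]
The first summand is a constant $C_j$ depending only on the residue $j$, while the second is the Hilbert--Samuel function of the finitely generated module $I^j$ with respect to the $\m$-primary ideal $L$, which for $q\gg 0$ is a polynomial in $q$ of degree $\dim R=2$. Substituting $q=(n-j)/r$, I conclude that on each residue class $n\equiv j \pmod r$ the function $c_n$ agrees, for $n\gg 0$, with a polynomial in $n$ of degree $2$; this is exactly quasi-polynomial behavior, with period dividing $r=\operatorname{ord}[I]$.

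The main obstacle is the reflexive-power bookkeeping of the second paragraph---checking that saturation equals double dual and that the factorization $I^{(qr+j)}=f^qI^{(j)}$ holds---since everything afterward is the classical eventual polynomiality of a Hilbert--Samuel function. I would also be careful about the degenerate possibility $L=R$ (when $I^r$ happens to be reflexive), in which case the second summand vanishes and $c_n$ is eventually constant on its residue class, still fitting the quasi-polynomial conclusion.
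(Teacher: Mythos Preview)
Your proof is correct and follows essentially the same route as the paper: both factor $I^r=fL$ with $L$ an $\m$-primary ideal (the paper writes $I^r=xJ$), deduce $I^{qr+j}=f^qL^qI^j$, and then split the relevant length as a constant depending on the residue $j$ plus the Hilbert--Samuel function $\length(I^j/L^qI^j)$. The only cosmetic difference is that you identify the saturation as the reflexive hull $I^{(n)}$ and divide by $f^q$ directly, whereas the paper strips off the principal factor via the chain $\lc^0_\m(R/I^n)\cong\lc^1_\m(I^n)\cong\lc^1_\m(L^qI^j)\cong\lc^0_\m(R/L^qI^j)$.
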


\begin{proof}
Let $r$ be  some integer such that of $r[I]=0$ in $\Cl(R)$. Then $I^r = I_1\cap I_2$, where $I_1$ is the determinant of $I^r$ and thus principal, and $I_2$ is $\m$-primary. Let $I_1=(x)$ we then have $I^r = xJ$ where $J=I_2:x$. Note that $J$ is $\m$-primary. For any integer $n$, let $n=ar+b$. We have that $I^n =I^{ar+b} = x^aJ^aI^b$. Then $$\lc^0_\m(R/I^n) \cong \lc^1_\m(I^n)   \cong \lc^1_\m(J^aI^b) \cong \lc^0_\m(R/J^aI^b)$$ 

To calculate the last term we use: $$\ses{I^b/J^aI^b}{R/J^aI^b}{R/I^b} $$
The leftmost term has finite length, thus what we want is equal to $\length(I^b/J^aI^b)+ \length( \lc^0_\m(R/I^b))$. Since $b$ is periodic and $a$ grows linearly with $n$, what we claimed follows. Note that the limit if $\length(\lc^0_\m(R/I^n))/n^2$ is equal to $e(J)/2r^2$.

\end{proof}

\section{The finite representation type case}\label{ft}

We now describe how to compute $\hk(M)$ when $M$ is a module of positive depth over a Gorenstein local ring of finite Cohen-Macaulay type. We first need some definitions. 

\begin{defn}\label{ftdef}
Let $R$ be a Gorenstein complete local ring of finite Cohen-Macaulay type with perfect residue field (in particular, $R$ must be a hypersurface singularity, see \cite{Yo}). Let $X_1,\cdots, X_n$ be all the indecomposable non-free Cohen-Macaulay modules. 

We define the {\it stable Cohen-Macaulay type} of $M$ to be the vector $(u_1,\cdots, u_n)$ with $X=\oplus X_i^{u_i}$, here $X$ is a Cohen-Macaulay approximation $\ses MNX$ where $\pd_RN<\infty$. This is well-defined since $R$ is complete. As $R$ is also a hypersurface, by taking syzygy one can see that $X$ is stably equivalenct 
to the  e-syzygy of $M$ where $e=2\dim R$. 

We also define  $v_j = \lim_{n\to \infty} \frac{\#(\ps R,  X_j)}{q^n}$, where $\#(\ps R,  X_j)$ is the number of copies of $X_j$ in the decomposition of $\ps R$. This limit exists by \cite{Sei,Yao}.
\end{defn}

\begin{prop}
Using the set up of Definition \ref{ftdef}. Let $M$ be an $R$-module of positive depth. One has:
$$\hk(M) = \sum_{1\leq i,j\leq n} u_iv_j\length(\Tor_1^R(X_i,X_j)) = \sum_{1\leq i,j\leq n} u_iv_j\length(\Tor_2^R(X_i,X_j))$$
\end{prop}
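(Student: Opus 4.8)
The plan is to run everything through the decomposition of $\ps R$ into indecomposables together with two homological identifications, isolating the comparison of the two $\Tor$-formulas as the real difficulty.

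First I would reduce to the indecomposable summands. Since $R$ is $F$-finite with perfect residue field we have $\ps R\cong\Rq$ as $R$-modules and $F^n(M)=M\otimes_R\Rq$, and since $R$ is complete of finite Cohen--Macaulay type we may write $\Rq\cong R^{c_0}\oplus\bigoplus_j X_j^{c_j}$ with $c_j=\#(\ps R,X_j)$. Tensoring with $M$ and applying $\lc^0_\m(-)$, which commutes with finite direct sums, the free summands contribute $\lc^0_\m(M)^{c_0}=0$ because $\depth M>0$; hence $\fhk M(n)=\sum_j c_j\,\length\lc^0_\m(M\otimes_R X_j)$. Dividing by $p^{nd}$ and letting $n\to\infty$ gives $\hk(M)=\sum_j v_j\,\length\lc^0_\m(M\otimes_R X_j)$, where $v_j=\lim_n \#(\ps R,X_j)/p^{nd}$.

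Next I would identify $\length\lc^0_\m(M\otimes_R X_j)$ with a $\Tor$-length. As $R$ has an isolated singularity each $X_j$ is locally free on the punctured spectrum, so $\Tor^R_{\ge1}(-,X_j)$ always has finite length. Tensoring the defining sequence $\ses MNX$ with $X_j$, I would first show $\Tor^R_{\ge1}(N,X_j)=0$: a finite free resolution $F_\bullet$ of $N$ (of length $\le d$ since $\pd_R N<\infty$) yields a complex $F_\bullet\otimes_R X_j$ of maximal Cohen--Macaulay modules with finite-length homology in positive degrees, so the Peskine--Szpiro acyclicity lemma forces that homology to vanish. The long exact sequence then gives $\Tor_1^R(X,X_j)\cong\ker(M\otimes X_j\to N\otimes X_j)$ and $\Tor_2^R(X,X_j)\cong\Tor_1^R(M,X_j)$. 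By the Auslander depth formula $\depth(N\otimes X_j)=\depth N>0$ (Tor-independence plus $\pd_R N<\infty$), so $N\otimes X_j$ has no nonzero finite-length submodule; therefore the finite-length module $\lc^0_\m(M\otimes X_j)$ is exactly that kernel. Thus $\length\lc^0_\m(M\otimes X_j)=\length\Tor_1^R(X,X_j)=\sum_i u_i\length\Tor_1^R(X_i,X_j)$, which together with the previous paragraph yields the first formula.

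The hard part is the equality of the $\Tor_1$ and $\Tor_2$ formulas. Since the $X_i$ are maximal Cohen--Macaulay over a hypersurface their minimal resolutions are $2$-periodic, so $\Tor^R_m$ is $2$-periodic for $m\ge1$ and the difference of the two sides is $\sum_{i,j}u_iv_j\,\theta(X_i,X_j)$, where $\theta(X_i,X_j)=\length\Tor_2^R(X_i,X_j)-\length\Tor_1^R(X_i,X_j)$ is Hochster's theta invariant. Using additivity of $\theta$ on direct sums and $\theta(X,R)=0$, this difference equals $\sum_j v_j\,\theta(X,X_j)=\lim_n p^{-nd}\,\theta(X,\Rq)$, and I must show this limit is $0$. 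The main obstacle is precisely this vanishing; I would attack it by invoking the structural properties of $\theta$ for an isolated hypersurface singularity, namely that $\theta(X,-)$ kills $[R]$ and every class supported in codimension $\ge2$, so that $\theta(X,\Rq)$ is controlled only by the codimension-one (divisor-class) part of $[\Rq]$, which grows like $p^{n(d-1)}=o(p^{nd})$; in fact $\Cl(R)$ is finite here, so $\theta(X,\Rq)$ is bounded. Note that the difficulty is concentrated in the even-dimensional case, since $\theta\equiv0$ when $d$ is odd and $\Cl(R)=0$ once $d\ge3$, so that for $d\ge3$ the equality even holds term by term. Equivalently, one could try to deduce the second formula from a syzygy-invariance $\hk(M)=\hk(\Omega M)$, using that the stable type of $\Omega M$ is obtained from that of $M$ by the involution $\Omega X_i\sim X_{\sigma(i)}$ and that $\Tor_1^R(\Omega X_i,X_j)\cong\Tor_2^R(X_i,X_j)$; this is an attractive alternative but appears to require the same $o(p^{nd})$ estimate.
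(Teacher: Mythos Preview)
Your argument for the first equality is correct and is essentially the paper's own, with only a cosmetic difference: you decompose $\ps R$ into indecomposable summands before tensoring, while the paper tensors the Cohen--Macaulay approximation $\ses{M}{N}{X}$ directly with $\ps R$, observes $\depth(N\otimes\ps R)=\depth N>0$ (your Peskine--Szpiro acyclicity step is exactly what justifies this), concludes $\length\lc^0_\m(M\otimes\ps R)=\length\Tor_1^R(X,\ps R)$, and only then decomposes both $X$ and $\ps R$.

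For the second equality, you have correctly isolated the issue as the vanishing of $\theta(X,\ps R)$, but your proposed attack through the divisor-class filtration goes astray. Two of your supporting claims are false: for isolated hypersurface singularities the known identical vanishing of $\theta$ occurs in \emph{even} dimension, not odd (already $R=k[[x,y]]/(xy)$ in dimension~$1$ has $\theta(R/(x),R/(y))=1$), and $\Cl(R)$ need not be finite or trivial for $d\ge 3$ even under finite CM type (the $A_1$ singularity $xy-zw$ has $\Cl(R)\cong\mathbb Z$). So the ``bounded $\theta$'' and ``term-by-term for $d\ge 3$'' remarks do not stand as written; what survives is your asymptotic heuristic $\theta(X,\ps R)=O(q^{d-1})$, which is plausible but which you have not actually proved.

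The paper bypasses all of this: it simply cites \cite{Dao} for the exact equality $\length\Tor_1^R(X,\ps R)=\length\Tor_2^R(X,\ps R)$. The mechanism behind that citation is that $\ps R$ is a Tor-rigid $R$-module over a hypersurface (if $\Tor_1^R(M,\ps R)=0$ then $\pd_R M<\infty$, hence all higher $\Tor$ vanish), and a main point of \cite{Dao} is that Tor-rigidity of one argument forces $\theta$ to vanish. This gives $\theta(X,\ps R)=0$ on the nose for every $n$, not merely $o(q^d)$, and it requires no case analysis by dimension or class group.
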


\begin{proof}
Take a MCM approximation $\ses MNX$ and tensor with $\ps R$, we get $$0 \to \Tor^R_1(X, \ps R) \to M\otimes \ps R \to N\otimes \ps R$$
Note that $\depth(N\otimes \ps R) = \depth N = \depth M>0$ and $\Tor^R_1(X, \ps R)$ has finite length as $R$ must have isolated singularity, we get that $\length(\lc_\m^0(M\otimes \ps R) = \length(\Tor^R_1(X, \ps R))$. The first equality is now obvious. 

For the second equality we just need  that $\length(\Tor^R_1(X, \ps R)) = \length(\Tor^R_2(X, \ps R))$ by \cite{Dao}.

\end{proof}

\begin{eg}
Let $R=k[[x,y,z]]/(xy-z^r)$ where $k$ is a perfect field of characteristic $p>0$. $R$ has finite type with $X_i= (x,z^i)$, $1\leq i\leq r-1$. It is not hard to check that $\length(\Tor_1^R(X_i,X_j)) = \min\{i,j,r-i,r-j\}$. Also, it is known that $v_j=1/r$. So for a module $M$ with positive depth and stable CM type $(u_1,\dots, u_n)$ one gets:
$$\hk(M) = \frac{1}{r}\sum_{1\leq i,j\leq r-1} u_i\min\{i,j,r-i,r-j\}$$
\end{eg}

\section{The toric  case}\label{toric}

In this section, we show how to compute the generalized  Hilbert-Kunz multiplicity of
$R/I$, where $R$ is a normal toric ring and $I$ is an of $R$ generated  by monomials. We fix the following notation.

\begin{notn}

Let $k$ be a field of chaaracteristic $p$ and $M \cong {\Bbb Z}^d$ be a lattice and 
$M_{\Bbb R} = M\otimes_{\Bbb Z}{\Bbb R}$. Let 
$\sigma \subset  M_{\Bbb R}$ be a strongly convex rational polyhedral cone 
and $R= k[ \sigma\cap M] = k[ X^m | m\in \sigma\cap M]$ be a normal toric ring.  
Let $I =( X^{m_1}, \ldots , X^{m_s})$ be a {monomial} ideal of $R$.
We put $\Gamma_I$ the convex hull of $\bigcup_{i=1}^s [m_i + \sigma]$ and 
$W_I =  \bigcup_{i=1}^s [m_i + \sigma]$.  {
We define a subset $LC_{I}$ of $M_{\Bbb R}$ by  
$$m \in LC_{I} \  \text{iff} \  [m+\sigma] \setminus [m+\sigma]\cap W_I  \ \text{has finite volume} $$ }
\end{notn}

{
\begin{prop}
With the notation above:
\begin{enumerate}
\item For $m\in M$, $x^m \in I: J^{\infty}$ (where $J$ is the maximal ideal) if and only if $m \in LC_I$. 
\item $LC_{I^{[q]}} = qLC_I$ and $W_{I^{[q]}} = qW_I$. 
\item $LC_I \setminus W_I$ is a bounded region in $M_{\Bbb R}$.  

\end{enumerate}
\end{prop}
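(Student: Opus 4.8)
The plan is to handle the three parts in order, reducing everything to the elementary geometry of the translated cones $m+\sigma$ and the staircase region $W_I$. For part (1), I would first record the dictionary: a monomial $X^m$ lies in $I=(X^{m_1},\dots,X^{m_s})$ exactly when $m\in W_I$, since $X^m\in(X^{m_i})$ iff $m-m_i\in\sigma$. The maximal ideal $J$ is generated by the non-unit monomials, so $X^m\in I:J^\infty$ means $X^{m+w}\in I$ for every $w\in\sigma\cap M$ lying deep enough in the cone. To make ``deep enough'' precise I would fix a linear functional $\phi$ strictly positive on $\sigma\setminus\{0\}$ (strong convexity); the minimal generators of $J$ have $\phi$-value bounded below, so $J^N$ contains every monomial $X^w$ with $\phi(w)$ large. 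Thus $X^m\in I:J^\infty$ is equivalent to all but finitely many lattice points of $m+\sigma$ lying in $W_I$, i.e.\ to the lattice points of $(m+\sigma)\setminus W_I$ having bounded $\phi$-value. Since the slices $\{x\in\sigma:\phi(x)=c\}$ are compact, a bound on $\phi$ is the same as boundedness, and because $(m+\sigma)\setminus W_I$ is a finite union of rational polyhedra, having only finitely many lattice points is equivalent to being bounded, hence to having finite volume. This is exactly the condition $m\in LC_I$.

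Part (2) I expect to be a pure scaling computation. Since $I^{[q]}=(X^{qm_1},\dots,X^{qm_s})$ and $q\sigma=\sigma$, one has $W_{I^{[q]}}=\bigcup_i(qm_i+\sigma)=q\bigcup_i(m_i+\sigma)=qW_I$ at once. For $LC$, multiplication by $q$ is a linear bijection of $M_{\Bbb R}$ that scales volume by $q^d$ and carries $(m+\sigma)\setminus W_I$ onto $(qm+\sigma)\setminus W_{I^{[q]}}$; as it preserves the property of having finite volume, $m\in LC_I$ iff $qm\in LC_{I^{[q]}}$, which is the assertion $LC_{I^{[q]}}=qLC_I$.

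Part (3) is the substantive one, and its heart is controlling the behavior of $LC_I$ at infinity. I would first observe that $LC_I+\sigma\subseteq LC_I$, since replacing $m$ by $m+w$ with $w\in\sigma$ only shrinks $(m+\sigma)\setminus W_I$; thus $\sigma$ is contained in the recession cone of $LC_I$, and the key claim is the reverse inclusion $\operatorname{rec}(LC_I)\subseteq\sigma$. Granting this, suppose $LC_I\setminus W_I$ were unbounded. As $W_I$ and $LC_I$ are finite unions of rational polyhedra, $LC_I\setminus W_I$ would contain a ray $\{p+tv:t\ge0\}$ with $v\neq0$, and then $v$ is a recession direction of $LC_I$, so $v\in\sigma$. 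But then the whole sub-ray lies in $p+\sigma$ and avoids $W_I$, exhibiting an unbounded subset of $(p+\sigma)\setminus W_I$ and contradicting $p\in LC_I$. Hence $LC_I\setminus W_I$ has trivial recession cone and, being a finite union of rational polyhedra, is bounded.

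The main obstacle is therefore the claim $\operatorname{rec}(LC_I)\subseteq\sigma$. Writing $\sigma=\{x:\langle u_j,x\rangle\ge0,\ j=1,\dots,r\}$ with $u_j$ the primitive inner facet normals, I would take $v\notin\sigma$, so $\langle u_{j_0},v\rangle<0$ for some facet $j_0$, and show that for any $m$ the translate $m+tv$ eventually leaves $LC_I$. As $t\to\infty$ the value $\langle u_{j_0},m+tv\rangle\to-\infty$, so $m+tv+\sigma$ contains points $y$ with $\langle u_{j_0},y\rangle<\min_i\langle u_{j_0},m_i\rangle$; every such $y$ fails the $j_0$-inequality for all generators and hence lies outside $W_I$. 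Because $\sigma$ is full-dimensional and strongly convex, dropping the single constraint coming from $u_{j_0}$ leaves an unbounded region, so this slab of points outside $W_I$ is unbounded, giving $(m+tv+\sigma)\setminus W_I$ unbounded and $m+tv\notin LC_I$. I expect the only delicate auxiliary points to be the routine verifications that $LC_I$ is genuinely a finite union of rational polyhedra and that ``unbounded'' may be upgraded to ``contains a ray,'' both of which follow from the piecewise-linearity of the defining conditions in the quantities $\langle u_j,m\rangle$.
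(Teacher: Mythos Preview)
Your arguments for (1) and (2) are correct and simply flesh out what the paper dismisses as ``clear from the definition.'' For (3) your proof is also correct, but it follows a genuinely different route from the paper's. The paper does not analyze the recession cone of $LC_I$ at all; instead it feeds parts (1) and (2) back into the algebra. If $LC_I\setminus W_I$ were unbounded, then being cut out by finitely many rational half-spaces it would contain infinitely many points of $\tfrac{1}{q}\Bbb Z^d$ for some $q$, hence by (2) there would be infinitely many lattice points in $LC_{I^{[q]}}\setminus W_{I^{[q]}}$, and by (1) these are exactly the monomials in $\lc^0_{\m}(R/I^{[q]})$, contradicting the fact that this module has finite length.

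So the paper's argument is shorter and exploits Noetherianity as a black box, elegantly reusing (1) and (2); yours is self-contained within convex geometry and never touches the algebra for (3). Your approach has the bonus of yielding the structural statement $\operatorname{rec}(LC_I)=\sigma$, which the paper's proof does not give. Both proofs quietly rely on the same ``routine'' point you flag at the end---that $LC_I$ (and hence $LC_I\setminus W_I$) is a finite union of rational polyhedra---so that unboundedness can be upgraded to containing a rational ray; the paper simply asserts the region is ``defined by finitely many half planes.'' One small remark: in your recession-cone step you use that the ray $p+\Bbb R_{\ge0}v\subseteq LC_I$ forces $v\in\sigma$, and your argument for this (showing $m+tv\notin LC_I$ for large $t$ whenever $v\notin\sigma$) really needs $d\ge2$ so that the slab $\{w\in\sigma:\langle u_{j_0},w\rangle<c'\}$ is unbounded; this is harmless since the $d=1$ case is trivial, but worth noting.
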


\begin{proof}
It is clear  from  the definition that $m\in LC_I$ iff $x^mJ^t \subseteq I$ for $t\gg0$. (2) is also clear. For (3), note that the region in question is defined by finitely many half planes. Thus, if it has infinite  volume, there will be $q$ big enough such that  $LC_I \setminus W_I$ contains infinitely many points in $\frac{1}{q}{\Bbb Z}^d$. In other words, there are infinitely many integral points in $LC_{I^{[q]}} \setminus W_{I^{[q]}}$. But the integral points in that region simply correspond to  the monomials in $\lc^0_{\m}(R/I^{[q]})$, a contradiction.

\end{proof}

\begin{rmk}
In the picture below, $LC_I \setminus W_I$ can be seen as the combination of the red and green regions. 
\end{rmk}

\begin{thm}
Let $R$ and $I$ be as above. Then $\hk (R/I) = \vol (LC_I \setminus W_I)$.  In particular, $\hk(R/I)\in \Q$. 
\end{thm}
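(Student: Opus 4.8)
The plan is to translate the whole computation into lattice-point counting in $M_{\Bbb R}$ and then pass to volumes by a standard asymptotic estimate. First I would record that, since $R/I^{[q]}$ is $M$-graded, the submodule $\lc^0_\m(R/I^{[q]})$ is itself monomial: it equals the saturation $(I^{[q]}:J^\infty)/I^{[q]}$, so its length is exactly the number of its monomial basis elements. A monomial $X^m$ contributes precisely when $X^m\in I^{[q]}:J^\infty$ but $X^m\notin I^{[q]}$. By part (1) of the Proposition applied to the ideal $I^{[q]}$, the first condition means $m\in LC_{I^{[q]}}$; and $X^m\notin I^{[q]}$ means exactly $m\notin W_{I^{[q]}}$, since $X^m\in I^{[q]}=(X^{qm_1},\dots,X^{qm_s})$ iff $m\in\bigcup_i[qm_i+\sigma]=W_{I^{[q]}}$. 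Hence
$$\fhk{R/I}(n) = \length(\lc^0_\m(R/I^{[q]})) = \#\big((LC_{I^{[q]}}\setminus W_{I^{[q]}})\cap M\big),$$
which is precisely the identification already used in the proof of part (3).

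Next I would rescale using part (2). Since $LC_{I^{[q]}}=qLC_I$ and $W_{I^{[q]}}=qW_I$, writing $\Omega:=LC_I\setminus W_I$ gives $LC_{I^{[q]}}\setminus W_{I^{[q]}}=q\Omega$, and therefore $\fhk{R/I}(n)=\#(q\Omega\cap M)$. By part (3) the region $\Omega$ is bounded, and (since $X^m\in R$ forces $m\in\sigma$) every lattice point of $\Omega$ already lies in $\sigma\cap M$, so no separate intersection with $\sigma$ is needed.

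It then remains to show $\#(q\Omega\cap M)/q^d\to\vol(\Omega)$ as $q=p^n\to\infty$. Here I would invoke a standard lattice-point asymptotic: $\Omega$ is cut out by finitely many rational half-spaces, hence is a finite union of rational polytopes, and for such a region $\#(q\Omega\cap M)=q^d\vol(\Omega)+O(q^{d-1})$, the error being controlled by the lattice points within bounded distance of the piecewise-linear boundary. Dividing by $q^d$ and letting $q\to\infty$ yields $\hk(R/I)=\vol(\Omega)=\vol(LC_I\setminus W_I)$. The rationality assertion is then automatic: a bounded region defined by rational linear inequalities triangulates into finitely many rational simplices, each of rational volume, so $\vol(\Omega)\in\Q$.

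The step I expect to require the most care is the lattice-point count, since $\Omega$ need not be convex: it is the difference of the $\sigma$-stable up-set $LC_I$ and the union of translated cones $W_I$. One must confirm that both $LC_I$ and $W_I$ are genuinely rational polyhedral regions, so that the Riemann-sum comparison applies, and that the boundary of $\Omega$, though only piecewise linear, contributes $o(q^d)$ lattice points. The algebraic content -- identifying $\lc^0_\m(R/I^{[q]})$ with the monomials indexed by $LC_{I^{[q]}}\setminus W_{I^{[q]}}$ -- is essentially handed to us by the preceding Proposition, so the genuine work lies in this elementary but slightly delicate volume-versus-count comparison for a non-convex rational region.
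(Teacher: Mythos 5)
Your proposal is correct and follows the same route the paper takes (the paper's own proof is a one-line appeal to the preceding Proposition, relying on exactly the identification of $\length(\lc^0_\m(R/I^{[q]}))$ with the lattice points of $LC_{I^{[q]}}\setminus W_{I^{[q]}} = q(LC_I\setminus W_I)$ that you spell out). You have simply made explicit the rescaling and the Riemann-sum comparison for a bounded rational polyhedral region that the paper leaves implicit.
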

\center
\includegraphics{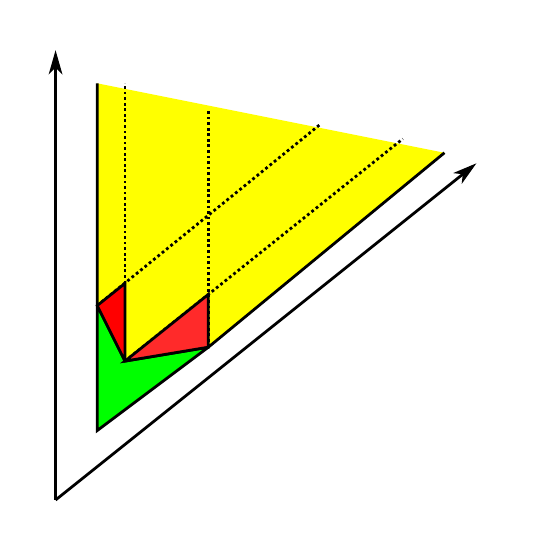}

\begin{proof}

The previous Proposition tells us that $m\in LC_{I^{[q]}}$ iff $m/q \in LC_I$, from which the result follows.
\end{proof}
}

We demonstrate the ideas of the last Theorem with two concrete examples.

\begin{prop}
Let $R=k[[x^r, x^{r-1}y, \cdots,y^r]]$ be isomorphic to  the $r$-Veronese of $k[[x,y]]$ and $I_m =(x^r, x^{r-1}y, \cdots,x^{r-m}y^m) \subset R$ be the one of the reflexive ideals of $R$ (note that $I$ corresponds to the element $\bar m \in \mathbb Z/(r) \cong \Cl(R)$). Then
$$\hk(R/I_m) = \frac{m(m+1)}{2r}$$

\end{prop}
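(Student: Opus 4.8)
The plan is to realize the assertion as a direct application of the preceding Theorem, so that the entire task reduces to computing the lattice-normalized volume $\vol(LC_{I_m}\setminus W_{I_m})$. First I would fix the toric data. Writing exponents of monomials as pairs $(a,b)$, the $r$-Veronese $R$ is the semigroup ring on the lattice $M=\{(a,b)\in\mathbb{Z}^2 : r\mid a+b\}$, an index-$r$ sublattice of $\mathbb{Z}^2$ (so its covolume inside $M_{\mathbb R}=\mathbb{R}^2$ is $r$), together with the cone $\sigma=\mathbb{R}_{\ge 0}^2$. Under this identification the generator $x^{r-i}y^i$ of $I_m$ corresponds to the point $m_i=(r-i,i)$ for $i=0,\dots,m$, all lying on the segment of the line $a+b=r$ running from $(r,0)$ to $(r-m,m)$.

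Next I would describe the two regions. By definition $W_{I_m}=\bigcup_{i=0}^{m}(m_i+\sigma)$ is the staircase whose lower-left frontier sits at $a=r-i$ on the strip $i\le b<i+1$ (for $0\le i<m$) and at $a=r-m$ for all $b\ge m$. The crux is to identify $LC_{I_m}$, and I claim it is the single translated quadrant $(r-m,0)+\sigma=\{a\ge r-m,\ b\ge 0\}$. To prove this I would invoke the finite-volume criterion from parts (1) and (3) of the Proposition: for $p=(p_1,p_2)$ the region $(p+\sigma)\setminus W_{I_m}$ has finite volume if and only if $p_1\ge r-m$ and $p_2\ge 0$. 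Necessity comes from the two asymptotic directions, since if $p_1<r-m$ then the strip $\{p_1\le a<r-m,\ b\ge m\}$ lies outside $W_{I_m}$ and has infinite volume, while if $p_2<0$ the strip $\{a\ge r,\ p_2\le b<0\}$ does the same; sufficiency holds because, once $p_1\ge r-m$ and $p_2\ge 0$, every point of $p+\sigma$ with $a\ge r$ or $b\ge m$ already lies in $W_{I_m}$, so the difference is confined to the bounded box $r-m\le a\le r$, $0\le b\le m$.

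With $LC_{I_m}=(r-m,0)+\sigma$ and $W_{I_m}\subseteq LC_{I_m}$ established, I would compute the Euclidean area of the difference by slicing into horizontal strips. On the strip $i\le b<i+1$ with $0\le i<m$, the set $LC_{I_m}\setminus W_{I_m}$ is exactly $\{\,r-m\le a<r-i\,\}$, of width $m-i$, while for $b\ge m$ the difference is empty. Hence the Euclidean area equals $\sum_{i=0}^{m-1}(m-i)=\tfrac{m(m+1)}{2}$. Passing from Euclidean area to the lattice-normalized volume of the Theorem divides by the covolume $r$ of $M$, giving $\vol(LC_{I_m}\setminus W_{I_m})=\tfrac{m(m+1)}{2r}$ and hence $\hk(R/I_m)=\tfrac{m(m+1)}{2r}$. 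The only genuinely delicate points are the correct determination of $LC_{I_m}$ — in particular noticing that it collapses to a single quadrant rather than to the convex hull $\Gamma_{I_m}$ — and keeping careful track of the normalization factor $r$ arising from the covolume of the Veronese lattice, which is precisely the source of the denominator.
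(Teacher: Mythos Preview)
Your argument is correct, but it takes a different route from the paper's own proof. The paper does \emph{not} invoke the toric volume theorem here; instead it uses the dimension-two machinery of Lemmas~\ref{keylem} and~\ref{lemcal}. Concretely, the paper splits
\[
\length\bigl(\lc^0_\m(R/I^{[q]})\bigr)=\length\bigl(\lc^0_\m(R/I^{q})\bigr)+\length\bigl(I^{q}/I^{[q]}\bigr),
\]
computes the first summand via the identity $I^r=x^{(r-m)r}\m^m$ and Lemma~\ref{lemcal} (getting a contribution of $m^2/2r$ to the limit), and then counts the monomials in $I^q\setminus I^{[q]}$ by covering them with $m$ right triangles along the line $a+b=rq$ (contributing $m/2r$). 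The sum $m^2/2r+m/2r$ gives the result.

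Your approach is more self-contained once the toric theorem is in hand: you never need the reflexivity of $I_m$, the torsion hypothesis on $[I_m]$, or the quasi-polynomial analysis of Lemma~\ref{lemcal}, and the whole computation collapses to a single staircase area. The paper's approach, on the other hand, makes the two distinct contributions to $\hk$ visible --- the $\epsilon$-multiplicity piece $\length(\lc^0_\m(R/I^q))$ and the Frobenius-vs-ordinary-power discrepancy $\length(I^q/I^{[q]})$ --- which is the point of Section~\ref{dim2}. Your identification of $LC_{I_m}$ as the single quadrant $(r-m,0)+\sigma$ and your handling of the covolume-$r$ normalization are both correct and are indeed the only subtle steps in your line of argument.
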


\begin{proof}
Let $I=I_m$. Note that $I^r = (x^{r^2}, \cdots, x^{(r-m)r}y^{rm}) = x^{(r-m)r}\m^m$.
We use Lemmas \ref{keylem} and \ref{lemcal} to calculate the relevant lengths.
It follows that  $\lim \length(\lc^0_\m(R/I^q))/q^d = e(\m^m)/2r^2 = m^2/2r.$

The second part involves   $\length(I^{q}/I^{[q]})$. The monomials that are in $I^q$ but not in $I^{[q]}$ are contained in the right triangles whose hypotenuses are the intervals $({iq},(r-i)q), ((i-1)q, (r-i+1)q)$ with $i=r,\cdots, r-m+1$. It is clear that the number of such monomials, which is the length we want, is of order $mq^2/2r$. So the second term contribute $m/2r$ to the limit. We conclude that: $$\hk(R/I) = m^2/2r+ m/2r =m(m+1)/2r $$

\end{proof}

\begin{prop}
Let $R=k[[x,y,z]]/(xy-z^r)$  and $I_m =(x,z^m) \subset R$ ($m<r$). Then
$$\hk(R/I_m) = \frac{m(r-m)}{r}$$

\end{prop}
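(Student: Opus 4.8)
The plan is to apply the Theorem directly: realize $R$ as a normal toric ring $k[\sigma\cap M]$ and compute $\vol(LC_{I_m}\setminus W_{I_m})$. First I would fix the toric data. Taking $x,y,z$ to be the monomials with exponent vectors $(r,0),(0,r),(1,1)$, the defining relation $xy=z^r$ becomes $(r,0)+(0,r)=r(1,1)$, so that $R\cong k[\sigma\cap M]$ with
$$M=\{(a,b)\in\mathbb{Z}^2 : a\equiv b \pmod r\}, \qquad \sigma=\mathbb{R}_{\ge 0}^2\subset M_{\mathbb R}=\mathbb{R}^2.$$
(Passing between $R$ and its completion changes none of the lengths $\length(\lc^0_\m(\,\cdot\,))$, so working with $k[\sigma\cap M]$ is harmless.) The lattice $M$ has index $r$ in $\mathbb{Z}^2$, hence covolume $r$; this normalization is the single delicate bookkeeping point, and I return to it at the end.

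Next I would read off the two generators of $I_m=(x,z^m)$ as the exponent vectors $m_1=(r,0)$ and $m_2=(m,m)$, so that
$$W_{I_m}=\{(a,b): a\ge r,\ b\ge 0\}\cup\{(a,b): a\ge m,\ b\ge m\}.$$
To determine $LC_{I_m}$ I would use the description in the preceding Proposition: $(a_0,b_0)\in LC_{I_m}$ iff $[(a_0,b_0)+\sigma]\setminus W_{I_m}$ has finite area. Writing out $[(a_0,b_0)+\sigma]\setminus W_{I_m}=\{a_0\le a<r,\ b\ge b_0,\ (a<m \text{ or } b<m)\}$, the only source of infinite area is the presence of points with $a_0\le a<m$, for which $b$ is then free to range to $+\infty$; since $m<r$ this forces exactly the condition $a_0\ge m$. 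Thus $LC_{I_m}\cap\sigma=\{a\ge m,\ b\ge 0\}$, and subtracting $W_{I_m}$ leaves the bounded rectangle
$$LC_{I_m}\setminus W_{I_m}=\{(a,b): m\le a<r,\ 0\le b<m\},$$
in agreement with part (3) of the Proposition.

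Finally I would compute the volume. The rectangle has Euclidean area $(r-m)\cdot m$, and since the volume in the Theorem is normalized with respect to the lattice $M$ of covolume $r$, we obtain $\vol(LC_{I_m}\setminus W_{I_m})=m(r-m)/r$. By the Theorem this equals $\hk(R/I_m)$, which is the claim.

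The main obstacle is not the geometry—the region is an explicit rectangle—but the lattice normalization, i.e. keeping track of the factor $1/r$. I would verify it by the integral-point count that underlies the Theorem: the monomials of $\lc^0_\m(R/I_m^{[q]})$ are precisely the $(a,b)$ with $a\equiv b\pmod r$ lying in the box $[mq,rq)\times[0,mq)$, and this box contains $\tfrac{1}{r}(r-m)m\,q^2+O(q)$ such lattice points, confirming both the leading term and the factor $1/r$. As an alternative I could instead mirror the previous proposition and invoke Lemmas \ref{keylem} and \ref{lemcal}, writing $\hk(R/I_m)$ as the epsilon multiplicity of $I_m$ plus the Frobenius term $\lim_q \length(I_m^q/I_m^{[q]})/q^2$; this route would first require checking that $I_m$ is reflexive and that $[I_m]=\bar m\in\Cl(R)\cong\mathbb{Z}/r$.
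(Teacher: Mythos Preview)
Your proof is correct but takes a genuinely different route from the paper's. The paper does exactly what you mention as an alternative at the end: it invokes Lemmas~\ref{keylem} and~\ref{lemcal}, splitting $\hk(R/I_m)$ into the epsilon-multiplicity term $\lim_q \length(\lc^0_\m(R/I^q))/q^2$ and the Frobenius term $\lim_q \length(I^q/I^{[q]})/q^2$, and computes each piece separately. It uses a different toric realization (assigning $x\mapsto (r,-1)$, $y\mapsto (0,1)$, $z\mapsto (1,0)$, so that the lattice is the standard $\mathbb{Z}^2$); the first term is obtained from the factorization $I^r=x^mJ$ and the multiplicity $e(J)$ via Lemma~\ref{lemcal}, while the second is read off as the area of an explicit triangle. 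You instead apply the general Theorem $\hk(R/I)=\vol(LC_I\setminus W_I)$ directly with a symmetric embedding, at the cost of a sublattice $M\subset\mathbb{Z}^2$ of index $r$; the region becomes a rectangle and the whole computation collapses to one step, with the covolume factor $1/r$ verified by the lattice-point count you give. Your approach is shorter and cleaner here; the paper's approach parallels the preceding Veronese computation and separates the epsilon-multiplicity contribution from the Frobenius contribution, which is informative in its own right.
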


\begin{proof}
Let $I=I_m$. Note that $I^r = x^m (x^{r-m}, x^{r-m-1}z^m ,\ldots , z^{r-m} y^{m-1}, y^m)=x^mJ$.
We again use Lemmas \ref{keylem} and \ref{lemcal}. 

If we assign point  $x \to (r, -1), y \to (0,1), z 
\to (1,0)$, the points corresponding to $(x^{r-m}, x^{r-m-1}z^r ,\ldots , z^{r-m} y^{m-1}, y^m)$ are
$( r(r-m), -(r-m)), \ldots,(r-m, m-1), (0, m)$, lying on a line of
slope $1/(r-m)$.  
This line and the cone defined by $x\ge 0$ and $y \ge - x/r$  
form a triangle of area $r(r-m)/2$, this means the multiplicity of the ideal
 $J$ is $rm(r-m)$.
Hence, $\lim \length(\lc^0_\m(R/I^q))/q^d=m(r-m)/2r$.

On the other hand, $\length(I^q / I^{[q]})$ corresponds to the triangle whose vertices are
 $(qm, 0), (qr, -q)$ and $(qr , -q(r-m)/r)$. The area is $m(r-m)q^2/r$. Summing up we have $\hk(R/I) = \frac{m(r-m)}{r}$.
\end{proof}

\section{The F-regular case}\label{fr}

Lastly, we study a connection between generalized Hilbert-Kunz multiplicity and tight closure theory.  We first recall the following criterion for tight closure  due to  Hochster-Huneke. 

\begin{lemma}\label{HH}
Let $R$ be equidimensional and either complete or essentially of finite type over a field  and $N \subseteq  L\subseteq G$ be finitely generated $R$-modules such that $L/N$ has finite length. Then $\hk(G/N) \geq \hk(G/L)$, and equality occurs if and only if $L\subseteq N_G^*$. 
\end{lemma}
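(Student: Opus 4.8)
The plan is to apply the Frobenius functor to the defining short exact sequence, track the behavior of $\lc^0_\m$, and thereby reduce the equality assertion to the classical length criterion for tight closure.

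First I would start from the short exact sequence
\[\ses{L/N}{G/N}{G/L}\]
and apply $F^n = -\otimes_R \ps R$. Since the Frobenius functor is right exact, this yields an exact sequence
\[ F^n(L/N) \to F^n(G/N) \to F^n(G/L) \to 0. \]
Because $L/N$ has finite length it is supported only at $\m$, so $F^n(L/N)$ again has finite length; hence the kernel $K_n$ of the surjection $F^n(G/N)\twoheadrightarrow F^n(G/L)$ is a finite length submodule. Applying $F^n$ to $\ses{N}{G}{G/N}$ and to $\ses{L}{G}{G/L}$ identifies $F^n(G/N) = F^n(G)/N_G^{[q]}$ and $F^n(G/L) = F^n(G)/L_G^{[q]}$, where $N_G^{[q]} := \im(F^n(N)\to F^n(G))$ and likewise for $L$; under this identification the induced map is the natural projection, and since $N\subseteq L$ forces $N_G^{[q]}\subseteq L_G^{[q]}$, the kernel is $K_n \cong L_G^{[q]}/N_G^{[q]}$.

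Next, since $K_n$ has finite length we have $\lc^0_\m(K_n)=K_n$ and $\lc^1_\m(K_n)=0$, so the long exact local cohomology sequence attached to $\ses{K_n}{F^n(G/N)}{F^n(G/L)}$ collapses to a short exact sequence
\[ \ses{K_n}{\lc^0_\m(F^n(G/N))}{\lc^0_\m(F^n(G/L))}. \]
Additivity of length then gives the exact formula
\[ \fhk{G/N}(n) - \fhk{G/L}(n) = \length\big(L_G^{[q]}/N_G^{[q]}\big) \ge 0 \]
for every $n$. Dividing by $q^d$ and letting $n\to\infty$ proves the inequality $\hk(G/N)\ge\hk(G/L)$ and shows that equality holds if and only if $\lim_{n\to\infty}\length(L_G^{[q]}/N_G^{[q]})/q^d = 0$.

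Finally, the limit on the right is exactly the leading coefficient of the relative Hilbert-Kunz length function $\length(L_G^{[q]}/N_G^{[q]})$, whose existence and vanishing are governed by the Hochster-Huneke criterion: under the standing hypothesis that $R$ is equidimensional and complete or essentially of finite type over a field (so that $R$ is locally formally equidimensional and admits test elements), this leading coefficient is zero precisely when $L\subseteq N_G^*$. Invoking this classical input closes the equality case. The main obstacle is really this last ingredient, the length criterion for tight closure, which carries all the depth; by contrast the earlier steps are a formal manipulation of the Frobenius functor, the only delicate point being the identification of $K_n$ with the bracket-power quotient $L_G^{[q]}/N_G^{[q]}$ via right-exactness of $F^n$.
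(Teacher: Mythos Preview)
The paper does not supply its own proof of this lemma; it is introduced with ``We first recall the following criterion for tight closure due to Hochster--Huneke'' and no argument is given. So there is no paper proof to compare against.

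Your argument is correct and is precisely the natural reduction one expects. The local-cohomology step is sound: from $\ses{K_n}{F^n(G/N)}{F^n(G/L)}$ with $K_n$ of finite length, one gets $\lc^0_\m(K_n)=K_n$ and $\lc^1_\m(K_n)=0$, so the long exact sequence collapses exactly as you claim. Your identification $K_n\cong L_G^{[q]}/N_G^{[q]}$ via right-exactness of $F^n$ is also fine. The resulting identity $\fhk{G/N}(n)-\fhk{G/L}(n)=\length(L_G^{[q]}/N_G^{[q]})$ then reduces everything to the classical Hochster--Huneke length criterion for tight closure, which is indeed the substantive input and is exactly what the paper is citing. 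One small point you might make explicit: the individual limits $\hk(G/N)$ and $\hk(G/L)$ are not a priori known to exist in this generality, but your identity shows their difference equals the relative Hilbert--Kunz limit (which does exist under the stated hypotheses), so the inequality and the equality clause should be read as statements about that difference.
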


We now want to show:

\begin{prop}\label{Freg}
Let $R$ be $F$-regular (i.e, all ideals are tightly closed) and $M$  be a finitely generated $R$-module. The following are equivalent:

\begin{enumerate}
\item $\hk(M) = 0$.
\item $\depth F^n(M) >0$ for all $n\geq 0$.
\end{enumerate}
\end{prop}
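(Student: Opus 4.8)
The plan is to prove the two implications separately, with the content concentrated entirely in (1) $\Rightarrow$ (2). The implication (2) $\Rightarrow$ (1) is immediate: if $\depth F^n(M) > 0$ for every $n$, then $\lc^0_\m(F^n(M)) = 0$, so $\fhk M(n) = 0$ for all $n$ and hence $\hk(M) = 0$ directly from the definition of the limit.

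For (1) $\Rightarrow$ (2), I would first record how the invariant transforms under Frobenius. Since the Frobenius functor composes as $F^k \circ F^n = F^{k+n}$, one has $\fhk{F^n(M)}(k) = \length(\lc^0_\m(F^{k+n}(M))) = \fhk M(k+n)$, so that
$$\frac{\fhk{F^n(M)}(k)}{p^{kd}} = p^{nd}\,\frac{\fhk M(k+n)}{p^{(k+n)d}}.$$
Letting $k \to \infty$, the hypothesis $\hk(M) = 0$ forces the right-hand side to $0$, so $\hk(F^n(M)) = 0$ for every $n \geq 0$ (this in fact computes $\hk(F^n(M)) = p^{nd}\hk(M)$ whenever the limits exist). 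Consequently it suffices to prove the following pointwise statement: for any finitely generated $P$ over $F$-regular $R$, $\hk(P) = 0$ implies $\depth P > 0$; applying this to each $P = F^n(M)$ then delivers (2).

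I would prove the contrapositive, namely $\depth P = 0 \Rightarrow \hk(P) > 0$. Assume $T := \lc^0_\m(P) \neq 0$; it has finite length. Choose a finite free cover $G \twoheadrightarrow P$ with kernel $K$, so that $P \cong G/K$, and let $L \subseteq G$ be the preimage of $T$, giving $K \subsetneq L \subseteq G$ with $L/K \cong T$ of finite length. Lemma \ref{HH}, applied to the chain $K \subseteq L \subseteq G$, yields $\hk(G/K) \geq \hk(G/L)$, with equality if and only if $L \subseteq K^*_G$. The crux is to exclude the equality case using $F$-regularity: since every submodule of a finitely generated module over an $F$-regular ring is tightly closed, $K^*_G = K$, and as $L \not\subseteq K$ the inequality is strict. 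Therefore $\hk(P) = \hk(G/K) > \hk(G/L) \geq 0$, so $\hk(P) > 0$, as desired.

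The main obstacle I anticipate is precisely the step $K^*_G = K$: the hypothesis is stated as ``all ideals are tightly closed,'' whereas the argument needs tight-closedness of an arbitrary submodule $K$ of the free module $G$. Bridging this requires the Hochster--Huneke equivalence between weak $F$-regularity and the tight-closedness of all submodules of all finitely generated modules, and I would invoke it after checking that the $F$-finite setting makes it applicable. A secondary technical point is that Lemma \ref{HH} is stated under an equidimensional and complete-or-essentially-of-finite-type hypothesis; since an $F$-regular ring is a normal domain (hence equidimensional), I would verify these hypotheses directly or reduce to the completion, using that both tight closure and $\hk$ are compatible with completion for $F$-finite rings.
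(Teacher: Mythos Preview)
Your proof is correct and follows essentially the same route as the paper's: both pass to the completion, replace $M$ by $F^n(M)$ to reduce to the case $\depth M = 0$, take a free presentation $G/K \cong M$, and invoke Lemma~\ref{HH} together with the $F$-regularity assumption $K^*_G = K$ to force the strict inequality $\hk(M) > \hk(G/L) \geq 0$. The only cosmetic difference is that the paper enlarges $K$ by a single socle element of $M$ rather than by the full preimage of $\lc^0_\m(M)$; either choice of $L$ works, and the technical concerns you flag (tight closure of submodules versus ideals, and the hypotheses of Lemma~\ref{HH}) are exactly the ones the paper handles by completing and citing \cite{Hu}.
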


\begin{proof}
We only need to show (1) implies (2). It is harmless to complete $R$ and $M$ (see Exercise 4.1 in \cite{Hu}). Suppose there exists $n\geq 0$ such that $\depth F^n(M)=0$, we need to prove that $\hk(M)>0$.  Replacing $M$ by $F^n(M)$ if neccessary, we may assume $\depth M=0$. 
Now take a short exact sequence $\ses{N}{G}{M}$ where $G$ is free. 
Let $x \in G$ represent an element in the socle of $M$, we know that $L = (N,x) \nsubseteq N = N_G^*$, thus  $\hk(M)>\hk(G/L)\geq 0$ by Lemma \ref{HH}. 
\end{proof}

\begin{rmk}
When $R$ is strongly $F$-regular, one can prove the above Proposition as follows. The assumption means that we have  decompositions of $R$-modules $\ps R = R^{a_q}\oplus M_q$ and $c= \lim_{n\to \infty} \frac{a_q}{q^d}>0$.  Then it is clear that $\hk(M)\geq c\length(\lc^0_{\m}(M))$, so the non-trivial direction (1) implies (2) is now easy to see. 

\end{rmk}

\begin{cor}
Let $R$ be $F$-regular of dimension at least $2$ and $I$ be a reflexive ideal that is locally free on the punctured spectrum. Then $\hk (R/I)=0$ if and only if $I$ is principal.
\end{cor}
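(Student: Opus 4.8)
The plan is to combine the two earlier results of this section, Proposition \ref{Freg} and Lemma \ref{keylem}, to turn the vanishing $\hk(R/I)=0$ into a statement about the ordinary powers of $I$, and then to read off principality from the fiber cone. Note first that an $F$-regular local ring is a normal Cohen--Macaulay domain, so both lemmas apply; completing $R$ (which changes neither $\hk$ nor the property of being principal) we may assume $R$ is complete. The easy direction is immediate: if $I=(f)$ then $f$ is a nonzerodivisor and $I^{[q]}=(f^q)$, so $\depth R/I^{[q]}=\dim R-1\ge 1$, whence $\lc^0_\m(R/I^{[q]})=0$ for all $q$ and $\hk(R/I)=0$. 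For the converse, assume $\hk(R/I)=0$. Proposition \ref{Freg} applied to $M=R/I$ gives $\depth F^n(R/I)=\depth R/I^{[q]}>0$, i.e. $\lc^0_\m(R/I^{[q]})=0$, for every $q=p^n$. Feeding this into Lemma \ref{keylem} forces $\length(I^{(q)}/I^{[q]})=0$, and since $I^{[q]}\subseteq I^q\subseteq I^{(q)}$ the whole chain collapses: $I^q=I^{[q]}=(f_1^q,\dots,f_s^q)$ for every $q=p^n$, where $f_1,\dots,f_s$ is a minimal generating set of $I$. In particular $\mu(I^q)\le s$ for all $q=p^n$.

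Next I interpret $\mu(I^q)=\dim_k(I^q/\m I^q)$ as the Hilbert function of the fiber cone $\mathcal F=\bigoplus_{n\ge 0}I^n/\m I^n$, a standard graded $k$-algebra ($k=R/\m$) whose Krull dimension is the analytic spread of $I$. Since this Hilbert function stays bounded by $s$ along the sequence $q=p^n\to\infty$, the Hilbert polynomial of $\mathcal F$ has degree $0$, so $\dim\mathcal F=1$ and $I$ has analytic spread one. (If $k$ is finite, one first passes to the standard faithfully flat extension $R\to R[t]_{\m R[t]}$ to enlarge the residue field, which affects neither the analytic spread, nor $\hk$, nor principality.) Hence $I$ admits a principal reduction $(f)$ with $f\in I$, so that $\overline{(f)}=\overline I\supseteq I$.

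Finally I upgrade this to genuine principality using reflexivity. A principal ideal in a normal domain is integrally closed, so $\overline{(f)}=(f)$ and therefore $I\subseteq (f)$, with both ideals reflexive of rank one. For every height-one prime $\p$ the ring $R_\p$ is a DVR, in which all ideals are integrally closed; since integral closure commutes with localization, $I_\p=\overline{I_\p}=\overline{(f)_\p}=(f)_\p$. As a reflexive rank-one module over a normal domain equals the intersection of its localizations at height-one primes (taken inside the fraction field), we conclude $I=\bigcap_{\height\p=1}I_\p=\bigcap_{\height\p=1}(f)_\p=(f)$. Thus $I$ is principal.

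The main obstacle is precisely the passage from $\hk(R/I)=0$ to analytic spread \emph{exactly} one. In dimension two this is automatic, since the $\epsilon$-multiplicity argument of the earlier Proposition only needs analytic spread $<\dim R=2$; but in higher dimension the vanishing of $\epsilon(I)$ alone (via \cite{JK}) gives merely analytic spread $<\dim R$, which is far too weak to force principality. The essential point is that $F$-regularity delivers the much stronger family of equalities $I^q=I^{[q]}$ for \emph{all} $q=p^n$, and it is the resulting boundedness of $\mu(I^q)$ --- rather than the vanishing of any single limit --- that pins the fiber cone down to dimension one. Confirming that this fiber-cone/Hilbert-function step is what carries the argument, and that the concluding reflexivity computation genuinely converts a principal reduction into a principal ideal, are the two places that require the most care.
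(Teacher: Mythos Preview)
Your proof is correct and follows exactly the same route as the paper: apply Proposition \ref{Freg} to get $\depth R/I^{[q]}>0$ for all $q$, feed this into Lemma \ref{keylem} to obtain $I^q=I^{[q]}$ for all $q=p^n$, deduce analytic spread one, and conclude principality; the paper's two-line proof leaves the fiber-cone and ``analytic spread one $\Rightarrow$ principal'' steps implicit, and you have filled them in carefully. One small redundancy: once you have $I\subseteq\overline{(f)}=(f)$ together with $f\in I$, you are done immediately---the height-one localization argument that follows is unnecessary.
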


\begin{proof}
By Proposition \ref{Freg} we only need to show that $\depth R/I^{[q]}=0$ for some $q$. But suppose it is not the case, then Lemma \ref{keylem} implies that $I^q=I^{[q]}$ for all $q$, thus the analytic spread is one. 

\end{proof}

Before moving on we recall the following limits studied  in \cite{DS}. Let $i\geq 0$ be an integer. Let 
$$\hk^i (M) := \lim_{n \to \infty} \frac{\ell(\lc^i_{\m}(F^n(M)))}{p^{nd}}$$

Let $\IPD(M)$ denote the set of prime ideals $\p$ such that $\pd_{R_\p}M_\p=\infty$. 

\begin{lemma}\label{lclem}
Let $R$ be of depth $d$. Let $N$ be an $R$-module such that $\IPD(N) \subseteq\{\m\}$. Let $M$ be a $t$-syzygy of $N$. Then $\lc_\m^{i+t}(F^n(M)) \cong \lc^i_\m(F^n(N))$ for $0\leq i\leq d-t-1$.

\end{lemma}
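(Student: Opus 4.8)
The plan is to reduce everything to the case $t=1$ and to treat that case by applying the Frobenius functor to the short exact sequence defining the syzygy and then chasing local cohomology. Two reductions come first. A $t$-syzygy of $N$ is a $(t-1)$-syzygy of the first syzygy $\Omega N$, and the condition $\IPD\subseteq\{\m\}$ passes to syzygies, since finite projective dimension over $R_\p$ is inherited by kernels of maps between free modules; thus $\IPD(\Omega N)\subseteq\IPD(N)\subseteq\{\m\}$, and an induction on $t$ reduces the statement to $t=1$. Moreover, for $i$ in the asserted range we have $i+t\le d-1<d=\depth R$, so $\lc^{i+t}_\m$ annihilates free modules; hence any free summand of $M$ is invisible to the left-hand side, and I may assume $M$ fits in an honest short exact sequence $\ses MGN$ with $G$ free.

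For the base case $t=1$, apply $F^n=-\otimes_R\ps R$ to $\ses MGN$. Since $G$ is free, $\Tor_1^R(G,\ps R)=0$, so the $\Tor$ long exact sequence collapses to
\[0 \to T \to F^n(M) \to F^n(G) \to F^n(N) \to 0, \qquad T := \Tor_1^R(N,\ps R),\]
which I split into $\ses T{F^n(M)}C$ and $\ses C{F^n(G)}{F^n(N)}$ with $C=\ker(F^n(G)\to F^n(N))$. The crucial input is that $T$ has finite length: for $\p\neq\m$ the hypothesis gives $\pd_{R_\p}N_\p<\infty$, and since the Frobenius functor is exact on modules of finite projective dimension, $\Tor_1^R(N,\ps R)_\p=\Tor_1^{R_\p}(N_\p,\ps{(R_\p)})=0$. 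Thus $T$ is supported only at $\m$.

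It remains to chase local cohomology. In the second sequence $F^n(G)$ is free, so $\lc^k_\m(F^n(G))=0$ for all $k<d$; its long exact sequence then yields $\lc^i_\m(F^n(N))\cong\lc^{i+1}_\m(C)$ for $0\le i\le d-2$. In the first sequence $\lc^k_\m(T)=0$ for $k\ge 1$ by finite length, so its long exact sequence gives $\lc^{i+1}_\m(F^n(M))\cong\lc^{i+1}_\m(C)$ for every $i\ge 0$. Concatenating the two isomorphisms proves $\lc^{i+1}_\m(F^n(M))\cong\lc^i_\m(F^n(N))$ for $0\le i\le d-2=d-t-1$, completing the base case; applying this isomorphism repeatedly along the chain $M=\Omega^t N,\ \Omega^{t-1}N,\ \dots,\ N$ then gives the general statement. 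I expect the main obstacle to be the finite-length claim for $T$, that is, the punctured-spectrum vanishing of the Frobenius $\Tor$; a secondary point of care is the index bookkeeping in the induction, where each application of the $t=1$ isomorphism forces the top cohomological index to stay below $d$, the binding constraint arising at the first step and producing exactly the bound $i\le d-t-1$.
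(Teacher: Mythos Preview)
Your proof is correct and follows essentially the same route as the paper: reduce to $t=1$ by induction on $t$, tensor the syzygy sequence with ${}^n\!R$, split the resulting four-term exact sequence into two short exact sequences, use finite length of $\Tor_1^R(N,{}^n\!R)$ and depth of the free module to pass through local cohomology, and then iterate. Your version is in fact more explicit than the paper's, which simply asserts that $\Tor_1^R(N,{}^n\!R)$ has finite length and leaves the induction to the reader; your justification of the finite-length claim via $\IPD(N)\subseteq\{\m\}$ and the Peskine--Szpiro vanishing of Frobenius $\Tor$ for modules of finite projective dimension, together with your remarks on free summands and the index bookkeeping, fill in exactly the details the paper omits.
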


\begin{proof}
We begin with tensoring the exact sequence $\ses{\syz N}{F}{N}$ with  $\ps R$ to get
\[0 \to \Tor_1^R (N, \ps R) \to F^n(\syz N) \xrightarrow{f} F^n(F) \to F^n(N) \to 0\]
which we break into:
$$ \ses{\Tor_1^R (N, \ps R)}{F^n(\syz N)}{C}$$
and
$$\ses{C}{F^n(F) }{F^n(N)} $$
Note that $\Tor_1^R (N, \ps R)$ has finite length, so the long sequence of local cohomology for the first sequence gives $\lc_\m^{i}(F^n(\syz N)) \cong \lc_\m^{i}(C)$ for $i>0$. For the second sequence, we have that $\lc_\m^{i}(F^n(N)) \cong  \lc_\m^{i+1}(C)$ for $0 \leq i\leq d-2$. Thus $$\lc_\m^{i}(F^n(N)) \cong  \lc_\m^{i+1}(F^n(\syz N))$$ for $0 \leq i\leq d-2$. A simple induction finishes the proof. 

\end{proof}

\begin{thm}\label{eithm}
Let $R$ be $F$-regular of dimension $d\geq 2$ and $0\leq a \leq b \leq d-1$ be integers. Let $M$ be an $R$-module that is locally free on the punctured spectrum and $\depth M \geq a$. The following are equivalent: 
\begin{enumerate}
\item $\hk^i (M)=0$ for  $a\leq i\leq b$. 
\item $\lc^i_\m (F^n(M))=0$ for all $a\leq i \leq b$ and all $n\geq 0$.
\end{enumerate}
\end{thm}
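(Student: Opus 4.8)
The direction $(2)\Rightarrow(1)$ is immediate: each $\hk^i(M)$ is by definition a limit of the quantities $\ell(\lc^i_\m(F^n(M)))/p^{nd}$, and these all vanish as soon as the modules $\lc^i_\m(F^n(M))$ themselves do. So the content is $(1)\Rightarrow(2)$, which I would prove by induction on $b$, phrasing the statement as a single claim $P(b)$: \emph{for every admissible $a\le b$ and every $R$-module $M$ that is locally free on the punctured spectrum with $\depth M\ge a$, conditions $(1)$ and $(2)$ for the range $[a,b]$ are equivalent.} The base case $b=0$ forces $a=0$ and range $\{0\}$, which is exactly Proposition \ref{Freg}, since $\hk=\hk^0$.

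The engine of the induction is a single cosyzygy step, for which I would first record two facts about $R$ (a normal Cohen--Macaulay local domain of depth $d$, as $F$-regular rings are). First, if $\depth M\ge 1$ and $M$ is locally free on the punctured spectrum, then $M$ is torsion-free: its torsion submodule is supported only at $\m$, hence has finite length, yet a module of positive depth has no nonzero finite-length submodule. Second, such a torsion-free $M$ fits into a short exact sequence $\ses{M}{F}{N}$ with $F$ free and $N$ again locally free on the punctured spectrum. For this I would take the natural map $M\to M^{\ast\ast}\to R^{N}$ dual to a surjection $R^{N}\twoheadrightarrow M^{\ast}$; on the punctured spectrum $M\cong M^{\ast\ast}$ is a vector bundle and this exhibits it as a subbundle of a trivial bundle, so the cokernel is locally free there, while global injectivity follows from torsion-freeness. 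Consequently $\IPD(N)\subseteq\{\m\}$, and $\depth N\ge \depth M-1$ by the depth lemma. Since $F$ is free and $M=\syz N$, Lemma \ref{lclem} with $t=1$ yields $\lc^i_\m(F^n(M))\cong\lc^{i-1}_\m(F^n(N))$, and hence $\hk^i(M)=\hk^{i-1}(N)$, for all $1\le i\le d-1$ and all $n$.

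With these in place the inductive step runs as follows. Assume $P(b-1)$ and that $\hk^i(M)=0$ for $a\le i\le b$. If $a=0$, Proposition \ref{Freg} first disposes of $i=0$, giving $\lc^0_\m(F^n(M))=0$ for all $n$ and in particular $\depth M\ge 1$; if $a\ge 1$ this depth bound holds outright. In either case $M$ is torsion-free, so I form the cosyzygy $N$ above. The displayed isomorphisms convert the hypotheses $\hk^i(M)=0$ for $\max(a,1)\le i\le b$ into $\hk^{i-1}(N)=0$ over the range $[\max(a,1)-1,\,b-1]$, which is exactly the range to which $P(b-1)$ applies for the module $(N,\max(a,1)-1,b-1)$ (legitimate since $\depth N\ge\max(a,1)-1$ and $b-1\le d-1$). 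Hence $\lc^{i-1}_\m(F^n(N))=0$, so $\lc^i_\m(F^n(M))=0$, for those $i$ and all $n$; combined with the vanishing at $i=0$ already obtained when $a=0$, this is precisely $(2)$ for $M$.

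I expect the main obstacle to be the cosyzygy construction and the check that it stays inside the class of modules governed by Lemma \ref{lclem}, namely that the cokernel $N$ remains locally free on the punctured spectrum so that $\IPD(N)\subseteq\{\m\}$; the torsion-free reduction and the depth estimate $\depth N\ge\depth M-1$ are the small but essential points that let the induction close up. The remaining care is purely bookkeeping: ensuring every index shift keeps the active range inside $[0,d-1]$, where Lemma \ref{lclem} is valid.
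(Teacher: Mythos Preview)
Your proof is correct and follows essentially the same strategy as the paper: reduce to Proposition~\ref{Freg} by taking cosyzygies and invoking Lemma~\ref{lclem} to shift the local-cohomology index. The only difference is organizational---the paper inducts on $b-a$ and pushes $M$ forward $a$ steps at once (so it only needs $\IPD(N)\subseteq\{\m\}$, which is automatic), whereas you induct on $b$ and push forward one step at a time, which obliges you to keep $N$ locally free on the punctured spectrum; you handle this correctly via the dual-embedding construction.
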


\begin{proof}
We use induction on $b-a$. It is enough to prove the case $b=a$, since the conclusion implies that $\depth M\geq a+1$, and we can replace $a$ by $a+1$. As $\depth M\geq a$, we can pushforward $a$ times and write $M$ as $\syz^a N$ for some module $N$.   Proposition \ref{Freg} and Lemma \ref{lclem} finish the proof. 
\end{proof}

\begin{cor}
Let $R$ be $F$-regular and $I$ be a reflexive ideal that is locally free on the punctured spectrum. If $[I]$ is torsion in the class group of $R$ then $I$ is Cohen-Macaulay. 
\end{cor}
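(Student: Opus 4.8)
The plan is to prove that $I$ is a maximal Cohen--Macaulay module, which, since $R$ is Cohen--Macaulay, is the same as saying $I$ is Cohen--Macaulay. Because $I$ is reflexive over the normal domain $R$ it satisfies Serre's condition $S_2$, so $\lc^0_\m(I) = \lc^1_\m(I) = 0$; hence $I$ is maximal Cohen--Macaulay precisely when $\lc^i_\m(I) = 0$ for all $2 \le i \le d-1$, where $d = \dim R$. If $d \le 2$ then $I$ is automatically maximal Cohen--Macaulay, so I would assume $d \ge 3$. I would then obtain the vanishing of these local cohomologies from Theorem \ref{eithm} applied with $M = I$, $a = 2$ and $b = d-1$: since $I$ is locally free on the punctured spectrum and $\depth I \ge 2$, it suffices to prove that $\hk^i(I) = 0$ for $2 \le i \le d-1$, and the implication $(1)\Rightarrow(2)$ of that theorem, specialized to $n=0$, then yields $\lc^i_\m(I) = 0$ in the desired range.

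To control $\hk^i(I)$ I would first replace the Frobenius functor by honest powers of the ideal. Tensoring $\ses{I}{R}{R/I}$ with $\ps R$ produces an exact sequence $\ses{\Tor_1^R(R/I, \ps R)}{F^n(I)}{I^{[q]}}$, and $\Tor_1^R(R/I, \ps R)$ has finite length because $I$, and hence $R/I$, is locally free on the punctured spectrum (where the relevant Tor vanishes). Consequently $\lc^i_\m(F^n(I)) \cong \lc^i_\m(I^{[q]})$ for $i \ge 2$. Next, since $I$ is locally free on the punctured spectrum, the symbolic power $I^{(q)} = (I^q)^{**}$ agrees with $I^{[q]}$ away from $\m$, so $I^{(q)}/I^{[q]}$ has finite length and therefore $\lc^i_\m(I^{[q]}) \cong \lc^i_\m(I^{(q)})$ for $i \ge 2$ as well.

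Now the torsion hypothesis enters. Let $r$ be a positive integer with $r[I] = 0$ in $\Cl(R)$, and for each $q = p^n$ write $q = ar + b$ with $0 \le b < r$. Since the symbolic powers realize the class-group addition, $I^{(q)}$ is the reflexive module of class $q[I] = b[I]$, so $I^{(q)} \cong I^{(b)}$ as $R$-modules. Thus, as $q$ varies, $\lc^i_\m(I^{(q)})$ takes only the finitely many values $\lc^i_\m(I^{(b)})$ with $0 \le b < r$, each of finite length for $i \le d-1$. In particular $\length(\lc^i_\m(F^n(I)))$ is bounded independently of $n$ for $2 \le i \le d-1$, whence $\hk^i(I) = 0$ in that range. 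Feeding this back into Theorem \ref{eithm} completes the argument.

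The main obstacle is the passage, in each fixed higher degree $i \ge 2$, from the Frobenius functor $F^n(I)$ to the module $I^{(b)}$ that is literally periodic in $n$; this requires the two finite-length comparisons above (the $\Tor$ term and the gap $I^{(q)}/I^{[q]}$) together with the identification of $I^{(q)}$ with $I^{(b)}$ through the class group. Once that bookkeeping is in place, the boundedness of the relevant lengths, and hence the vanishing $\hk^i(I) = 0$, is immediate, and Theorem \ref{eithm} does the heavy lifting of converting a statement about limits into the exact vanishing $\lc^i_\m(I) = 0$.
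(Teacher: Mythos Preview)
Your argument is correct and follows essentially the same route as the paper: pass from $F^n(I)$ to $I^{(q)}$ modulo finite-length modules (the paper does this in one step via the biduality map $F^n(I)\to F^n(I)^{**}\cong I^{(q)}$, while you factor through $I^{[q]}$), use that the isomorphism class of $I^{(q)}$ is periodic in $n$ because $[I]$ is torsion to conclude $\hk^i(I)=0$ for $2\le i\le d-1$, and then invoke Theorem~\ref{eithm}. One small wording slip: $R/I$ is not locally \emph{free} on the punctured spectrum but only locally of projective dimension at most one; nevertheless your conclusion that $\Tor_1^R(R/I,\ps R)$ has finite length is correct, since at each $\p\ne\m$ the ideal $I_\p$ is principal generated by a nonzerodivisor, so $F^n(I)_\p\to F^n(R)_\p$ is injective.
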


\begin{proof}
We can assume  $R$ has dimension is at least $3$. We just note that the double dual of $F^n(I)$, $F^n(I)^{**}$, is isomorphic to $I^{(q)}$, which corresponds to the element $q[I]$ in $\Cl(R)$. The natural map   $F^n(I) \to F^n(I)^{**}$ has kernel and cokernel of finite length. It follows that $\lc^i_\m(F^n(I)) \cong \lc^i_\m(F^n(I)^{**}) \cong \lc^i_\m(I^{(q)})$ for $i\geq 2$. But the isomorphism classes of $I^{(q)}$ will be periodic as $[I]$ is torsion. Thus $\hk^i(I)=0$ for $2\leq i \leq d-1$, and by Theorem \ref{eithm}, $\lc^i_\m(I)=0$ for $2\leq i\leq d-1$, which is all we need to prove. 
\end{proof}

\begin{rmk}
If the order of $[I]$ is prime to the characteristic of $R$, the result was first proved, without condition that $I$ is locally free on the punctured spectrum, for strongly $F$-regular rings in \cite{Wa}. The condition on the order of $[I]$ was removed in \cite[Corollary 3.3]{PS}. All of these results will be extended in \cite{DSe}, with a more direct approach.  
\end{rmk}

\end{document}